\newtheorem{thm}{Theorem}[section]
\newtheorem{proof}{Proof}
\newtheorem{cor}{Corollary}
\date{}
\begin{document}

%\centerline{\bf  International Mathematical Forum, Vol. x, 200x, no. xx, xxx - xxx}

\centerline{}

\centerline{}

\centerline {\Large{\bf Some Properties of Kenmotsu Manifolds Admitting  }}

%\centerline{}

\centerline{\Large{\bf a Semi-symmetric Non-metric Connection}}

\centerline{}

\centerline{\bf {S. K. Chaubey}}

\centerline{Section of Mathematics, Department of IT, Shinas college of technology,}

\centerline{Shinas,  P.O. Box 77, Postal Code 324, Sultanate of Oman.}

\centerline{Email: sk$22_{-}$math@yahoo.co.in}

%\centerline{Address of Author1 forth line}

\centerline{}

\centerline{\bf {A. C. Pandey}}

%\centerline{}

\centerline{Department of Mathematics, Bramanand P. G. College, Kanpur$-208004$, U. P., India. }

\centerline{Email: acpbnd$73$@gmail.com}
%\centerline{Address of Author2 second line}
\centerline{}

\centerline{\bf {N. V. C. Shukla}}

%\centerline{}

\centerline{Department of Mathematics and Astronomy, Lucknow University -226007, U.P., India. }

\centerline{Email: nvcshukla$72$@gmail.com}
%\centerline{Address of Author2 third line}
\begin{abstract}
The aim of this paper is to study generalized recurrent, generalized Ricci-recurrent, weakly symmetric and weakly Ricci-symmetric Kenmotsu manifolds with respect to the semi-symmetric non-metric connection.
\end{abstract}
{\bf Subject Classification:} \textbf{$53C15$, $53B05$, $53C25$.} \\

{\bf Keywords:} Kenmotsu manifold, semi-symmetric non-metric connection, generalized recurrent manifold, generalized Ricci-recurrent manifold, weakly symmetric manifold, weakly Ricci-symmetric manifold.

\section{Introduction}
Let $(M_{n},g)$ be a Riemannian manifold of dimension $n$. A linear connection $\nabla$ in $(M_{n},g)$, whose torsion tensor $T$ of type $(1,2)$ is defined as
\begin{equation}
T(X,Y)=\nabla_{X}Y-\nabla_{Y}X-[X,Y],
\end{equation}
for arbitrary vector fields $X$ and $Y$, is said to be torsion free or symmetric if $T$ vanishes, otherwise it is non-symmetric. If the connection $\nabla$ satisfy $\nabla{g}=0$ in $(M_{n},g)$, then it is called metric connection otherwise it is non-metric. Friedmann and Schouten \cite{friedmann} introduced the notion of semi-symmetric linear connection on a differentiable manifold. Hayden \cite{hayden} introduced the idea of semi-symmetric linear connection with non-zero torsion tensor on a Riemannian manifold. The idea of semi-symmetric metric connection on Riemannian manifold was introduced by Yano \cite{yano}. He proved that a Riemannian manifold with respect to the semi-symmetric metric connection has vanishing curvature tensor if and only if it is conformally flat. This result was generalize for vanishing Ricci tensor of the semi-symmetric metric connection by T. Imai (\cite{imai1}, \cite{imai2}). Various properties of such connection have studied in (\cite{add4}, \cite{add5}) and by many other geometers. Agashe and Chafle \cite{agashe1} defined and studied a semi-symmetric non-metric connection in a Riemannian manifold. This was further developed by Agashe and Chafle \cite{agashe2}, De and Kamilya \cite{de1}, Pandey and Ojha \cite{pandey}, Chaturvedi and Pandey \cite{chaturvedi} and others. Sengupta, De and Binh \cite{sengupta}, De and Sengupta \cite{de2} defined new type of semi-symmetric non-metric connections on a Riemannian manifold and studied some geometrical properties with respect to such connections. In this connection, the properties of  non-metric connections have studied in (\cite{ozgur3}, \cite{kumar}, \cite{dubey}, \cite{add1}, \cite{add2}) and many others. In $2008$, Tripathi introduced the generalized form of a new connection in Riemannian manifold \cite{tripathi2}. Chaubey \cite{chaubey1, chaubeyadd1} defined semi-symmetric non-metric connections on an almost contact metric manifold and studied its different geometrical properties. Some properties of such connections have been noticed in (\cite{jaiswal}, \cite{chaubey3}, \cite{skchaubey_2011}, \cite{addsk4}, \cite{add6}) and others.
\newline 
In 1972, K. Kenmotsu \cite{kenmotsu} introduced a class of contact Riemann manifold known as Kenmotsu Manifold. He studied that if a Kenmotsu manifold satisfies the condition R(X,Y)Z = 0, then the manifold is of negative curvature -1, where R is the Riemannian curvature tensor of type (1,3) and R(X,Y)Z is derivative of tensor algebra at each point of the tangent space. Several properties of Kenmotsu Manifold have been studied by  Sinha and Srivastav \cite{srivastava}, De \cite{ucd}, De and Pathak \cite{pathak}, Chaubey et al. (\cite{chaubeyadd2}, \cite{addsk3}, \cite{skchaubey_2012}) and many others. Ozgur \cite{ozgur2} studied generalised recurrent Kenmotsu manifold and proved that if M be a generalised recurrent Kenmotsu manifold and generalised Ricci Recurrent Kenmotsu manifold then $\beta = \alpha$ holds on M. Sular studies the generalised recurrent and generalised Ricci recurrent Kenmotsu manifolds with respect to semi symmetric metric connection and proved that $\beta = 2\alpha$ where $\alpha$ and $\beta$ are smooth functions and M is generalised recurrent and generalised Ricci recurrent Kenmotsu manifold admitting a semi-symmetric connection \cite{sular}. In the present paper, we studied the properties of semi-symmetric non-metric connection in Kenmotsu manifolds.
\newline
The present paper is organized as follows. Section $2$ is preliminaries in which basic concepts of Kenmotsu manifolds are given. Section $3$ deals with the brief account of semi-symmetric non-metric connection. In section $4$, we define a generalized recurrent Kenmotsu manifolds with respect to the semi-symmetric non-metric connection and studied its some properties. Section $5$ is concerned with the weakly symmetric Kenmotsu manifolds with respect to the semi-symmetric non-metric connection.
\section{Preliminaries}
An $n$-dimensional Riemannian manifold $(M_n,g)$ of class $C^{\infty}$ with a $1$-form $\eta$, the associated vector field $\xi$ and a $(1,1)$ tensor field $\phi$ satisfying
\begin{equation}
\label{2.1}
{\phi}^2{X}+X={\eta(X)}{\xi},
\end{equation}
\begin{equation}
\label{2.2}
{\phi}{\xi}=0,\hspace{.5cm}{\eta({\phi{X}})}=0,\hspace{.5cm}{\eta(\xi)}=1,
\end{equation}
for arbitrary vector field $X$, is called an almost contact manifold. This system $({\phi},{\xi},{\eta})$ is called an almost contact structure to $M_n$  \cite{blair}. If the associated Riemannian metric $g$ in $M_n$ satisfy
\begin{equation}
\label{2.3}
g({\phi}X,{\phi}Y)=g(X,Y)-{\eta}(X){\eta}(Y),
\end{equation}
for arbitrary vector fields $X$, $Y$ in $M_n$, then $(M_n,g)$ is said to be an almost contact metric manifold. Putting ${\xi}$ for $X$
in (\ref{2.3}) and using (\ref{2.2}), we obtain
\begin{equation}
\label{2.4}
g({\xi},Y)={\eta}(Y).
\end{equation}
Also,
\begin{equation}
\label{2.5}
\varphi(X,Y){\stackrel{\mathrm{def}}{=}}g({\phi}X,Y)
\end{equation}
gives
\begin{equation}
\label{2.6}
\varphi(X,Y)+\varphi(Y,X)=0.
\end{equation}
where $\varphi = d\eta$ is 2-form.
\newline
	If moreover
\begin{equation}
\label{2.7}
(D_X{\phi})(Y)=g({\phi{X}},Y){\xi}-{\eta({Y})}{\phi{X}} ,
\end{equation}
\begin{equation}
\label{2.8}
D_X{\xi}=X-{\eta({X})}{\xi} ,
\end{equation}
hold in $(M_n,g)$, where $D$ being the Levi-Civita connection of the Riemannian metric $g$, then $(M_n,g)$ is called a Kenmotsu manifold \cite{kenmotsu}.
Also the following relations hold in a Kenmotsu manifold
\begin{equation}
\label{2.9}
K(X,Y){\xi}=\eta({X})Y-\eta{(Y)}X,
\end{equation}
\begin{equation}
\label{2.10}
K({\xi},X)Y=\eta(Y){X}-g(X,Y){\xi},
\end{equation}
\begin{equation}
\label{2.11}
S(X,{\xi})=-(n-1){\eta(X)},
\end{equation}
\begin{equation}
\label{2.12}
(D_X{\eta})(Y)=g(X,Y)-{\eta(X)}{\eta(Y)}
\end{equation}
for arbitrary vector fields $X$ and $Y$, where $K$ and $S$ denote the Riemannian curvature and Ricci tensors of the connection $D$ respectively.

\section{Semi-symmetric non-metric connection}

A linear connection $\nabla$ on $(M_n,g)$ is said to be a semi-symmetric non-metric connection if the torsion tensor $T$ of the connection $\nabla$ and the Riemannian metric $g$ satisfy the following conditions:
\begin{equation}
\label{3.1}
T(X,Y)=2\varphi(X,Y)\xi,
\end{equation}
\begin{equation}
\label{3.2}
(\nabla_{X}g)(Y,Z)=-\eta(Y)\varphi(X,Z)-\eta(Z)\varphi(X,Y),
\end{equation}
for arbitrary vector fields $X$, $Y$ and $Z$, where $\eta$ is $1-$form on $(M_{n},g)$ with $\xi$ as associated vector field.
If $D$ denotes the Levi-Civita connection, then the semi-symmetric non-metric connection \cite{chaubeyadd1,chaubey3} $\nabla$ on $(M_{n},g)$ is defined as
\begin{equation}
\label{3.3}
\nabla_{X}Y=D_{X}Y+g(\phi{X},Y)\xi,
\end{equation}
for arbitrary vector fields $X$ and $Y$.

	The curvature tensor $R$ of the semi-symmetric non-metric connection \cite{chaubey3} $\nabla$ is defined as
\begin{eqnarray}
\label{3.4}
R(X,Y)Z&=&K(X,Y)Z+g(\phi{Y},Z)D_{X}\xi-g(\phi{X},Z)D_{Y}\xi
\nonumber\\&&+g\left( (D_{X}\phi)(Y)-(D_{Y}\phi)(X),Z\right)\xi.
\end{eqnarray}
From (\ref{2.2}), (\ref{2.4}), (\ref{2.7}) and (\ref{2.8}), it follows that
\begin{equation}
\label{3.5}
R(X,Y)Z=K(X,Y)Z+g(\phi{Y},Z)X-g(\phi{X},Z)Y+2\eta(Z)g(\phi{X},Y)\xi
\end{equation}
which give
\begin{equation}
\label{3.6}
\tilde{S}(Y,Z)=S(Y,Z)+(n-1)g(\phi{Y},Z)
\end{equation}
and
\begin{equation}
\label{3.7}
\tilde{r}=r.
\end{equation}
Here $\tilde{S}$ and $\tilde{r}$  denote the Ricci tensor and scalar curvature with respect to the semi-symmetric non-metric connection $\nabla$ and $r$ is the scalar curvature with respect to the Levi-Civita connection $D$. From (\ref{3.7}) we leads the following corollary:
\begin{cor}
Let $M_{n}$ be an $n-$dimensional Kenmotsu manifold equipped with a semi-symmetric non-metric connection $\nabla$, then the scalar curvature with respect to semi-symmetric non-metric connection is equal to scalar curvature with respect to Levi-Civita connection.
\end{cor}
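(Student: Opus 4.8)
The plan is to read the corollary off from equation (\ref{3.7}), so the substantive task is to derive $\tilde{r}=r$ from the Ricci-tensor identity (\ref{3.6}). First I would fix a point $p\in M_n$ and choose a local orthonormal frame $\{e_i\}_{i=1}^{n}$ at $p$ with respect to $g$. Setting $Y=Z=e_i$ in (\ref{3.6}) and summing over $i$ gives
\begin{equation}
\sum_{i=1}^{n}\tilde{S}(e_i,e_i)=\sum_{i=1}^{n}S(e_i,e_i)+(n-1)\sum_{i=1}^{n}g(\phi e_i,e_i),
\end{equation}
that is, $\tilde{r}=r+(n-1)\,\mathrm{tr}(\phi)$, since by definition the metric traces of the Ricci tensors $\tilde{S}$ and $S$ are the scalar curvatures $\tilde{r}$ and $r$.

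The next step is to show $\mathrm{tr}(\phi)=0$. From (\ref{2.5}) and (\ref{2.6}) we have $g(\phi X,Y)=\varphi(X,Y)=-\varphi(Y,X)=-g(\phi Y,X)$, so $\phi$ is skew-symmetric with respect to $g$. Consequently $\mathrm{tr}(\phi)=\sum_{i}g(\phi e_i,e_i)=-\sum_{i}g(e_i,\phi e_i)=-\mathrm{tr}(\phi)$, which forces $\mathrm{tr}(\phi)=0$. Substituting this back yields $\tilde{r}=r$, which is precisely (\ref{3.7}); expressed in words, this is exactly the assertion of the corollary.

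I do not anticipate a real obstacle here: the argument is a single contraction of (\ref{3.6}) combined with the skew-symmetry of $\phi$ encoded in (\ref{2.6}). The only point that deserves a line of care is that the trace of the term $g(\phi Y,Z)$ be computed frame-independently; this is automatic because $\mathrm{tr}(\phi)$ is the trace of a $(1,1)$-tensor, hence a well-defined scalar, and equivalently because the $(0,2)$-tensor $(Y,Z)\mapsto g(\phi Y,Z)$ is antisymmetric and the metric trace of any antisymmetric $(0,2)$-tensor vanishes.
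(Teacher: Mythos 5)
Your proposal is correct and follows exactly the route implicit in the paper: equation (\ref{3.7}) is obtained by contracting (\ref{3.6}) over an orthonormal frame, with the extra term $(n-1)\sum_i g(\phi e_i,e_i)$ vanishing because $\varphi(X,Y)=g(\phi X,Y)$ is antisymmetric by (\ref{2.5}) and (\ref{2.6}). The paper merely states (\ref{3.7}) without writing out this trace computation, so your argument supplies precisely the omitted detail and nothing different in substance.
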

Replacing $Z$ by $\xi$ in (\ref{3.5}) and (\ref{3.6}) and then using (\ref{2.2}) and (\ref{2.4}), we get
\begin{equation}
\label{3.8}
R(X,Y)\xi=K(X,Y)\xi+2g(\phi{X},Y)\xi
\end{equation}
and
\begin{equation}
\label{3.9}
\tilde{S}(Y,\xi)=S(Y,\xi).
\end{equation}

\section{ Generalized Recurrent Kenmotsu Manifolds }
{\defi A non-flat $n-$dimensional differentiable manifold $M_{n}$, $(n>3)$, is called generalized recurrent manifold \cite{de3} if its curvature tensor $K$ satisfies the condition
\begin{equation}
\label{4.1}
(D_{X}K)(Y,Z)W=A(X)K(Y,Z)W+B(X)\left[ g(Z,W)Y-g(Y,W)Z\right] ,
\end{equation}
where $A$ and $B\neq{0}$ are $1-$forms defined as
\begin{equation}
\label{4.2}
A(X)=g(X,\rho_{1}),{\hspace{10.5pt}} B(X)=g(X,\rho_{2}),
\end{equation}
for arbitrary vector fields $X$, $Y$, $Z$ and $W$. Here $\rho_{1}$ and $\rho_{2}$ are the vector fields associated with the $1-$forms $A$ and $B$ respectively.}

{\defi A non-flat $n-$dimensional differentiable manifold $M_{n}$, $(n>3)$, is called generalized Ricci-recurrent \cite{de3} if its Ricci tensor $S$ satisfies the condition
\begin{equation}
\label{4.3}
(D_{X}S)(Y,Z)=A(X)S(Y,Z)+(n-1)B(X)g(Y,Z),
\end{equation}
for arbitrary vector fields $X$, $Y$ and $Z$, where $A$ and $B$ are defined as in (\ref{4.2}).}

	In the similar fashion, we defined the following definitions :
{\defi A non-flat $n-$dimensional differentiable manifold $M_{n}$, $(n>3)$, is called generalized recurrent with respect to the semi-symmetric non-metric connection $\nabla$ if its curvature tensor $R$ satisfies the condition
\begin{equation}
\label{4.4}
(\nabla_{X}R)(Y,Z)W=A(X)R(Y,Z)W+B(X)\left[ g(Z,W)Y-g(Y,W)Z\right] ,
\end{equation}
for arbitrary vector fields $X$, $Y$, $Z$ and $W$.

{\defi A non-flat $n-$dimensional differentiable manifold $M_{n}$, $(n>3)$, is called generalized Ricci-recurrent with respect to the semi-symmetric non-metric connection $\nabla$ if its Ricci tensor $\tilde{S}$ satisfies the condition
\begin{equation}
\label{4.5}
(\nabla_{X}\tilde{S})(Y,Z)=A(X)\tilde{S}(Y,Z)+(n-1)B(X)g(Y,Z),
\end{equation}
for arbitrary vector fields $X$, $Y$, $Z$, where $A$ and $B$ are defined as in (\ref{4.2}).}
Now we consider the generalized recurrent and generalized Ricci-recurrent Kenmotsu manifolds admitting the semi-symmetric non-metric connection $\nabla$ and prove the following theorems:
\begin{thm}
Let $M_{n}$ be an $n-$dimensional generalized recurrent Kenmotsu manifold equipped with a semi-symmetric non-metric connection $\nabla$. Then $B=A$ holds on $M_{n}$.
\end{thm}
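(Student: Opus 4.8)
The strategy is to compute $(\nabla_X R)(Y,Z)W$ in terms of the Levi-Civita data and then contract with $\xi$ in a suitable slot to force a relation between $A$ and $B$. First I would express the covariant derivative of $R$ along $\nabla$ using formula (\ref{3.3}): for the $(1,3)$ curvature tensor $R$ of the semi-symmetric non-metric connection, $(\nabla_X R)(Y,Z)W = D_X\big(R(Y,Z)W\big) - R(\nabla_X Y,Z)W - R(Y,\nabla_X Z)W - R(Y,Z)\nabla_X W + g(\phi X, R(Y,Z)W)\xi$, and then replace each $\nabla_X(\cdot)$ by $D_X(\cdot) + g(\phi X,\cdot)\xi$. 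This reduces $(\nabla_X R)$ to $(D_X R) + $ explicit correction terms involving $\phi$, $\xi$ and $\eta$. Using the relation (\ref{3.5}) between $R$ and $K$ together with the Kenmotsu identities (\ref{2.7})–(\ref{2.12}), I would rewrite everything purely in terms of $K$, $S$ and the structure tensors.

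The decisive step is the substitution $W = \xi$ (and possibly also evaluating a further $\eta$-contraction). On the left, (\ref{4.4}) with $W=\xi$ gives $(\nabla_X R)(Y,Z)\xi = A(X) R(Y,Z)\xi + B(X)[\eta(Z)Y - \eta(Y)Z]$, and by (\ref{3.8}) the term $R(Y,Z)\xi = K(Y,Z)\xi + 2g(\phi Y,Z)\xi$ simplifies via (\ref{2.9}) to $\eta(Y)Z - \eta(Z)Y + 2g(\phi Y,Z)\xi$ — wait, sign aside, it is a combination of $\eta(Y)Z$, $\eta(Z)Y$ and a $\xi$-term. On the other hand, computing $(\nabla_X R)(Y,Z)\xi$ directly from the expanded expression above and using $D_X\xi = X - \eta(X)\xi$, (\ref{2.12}) and (\ref{3.8}) yields another explicit tensor. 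Equating the two expressions produces an identity in $X,Y,Z$ with coefficients $A(X)$ and $B(X)$. Taking a suitable trace (e.g. contracting over $Y$ and $Z$, or setting $Y=\xi$, or applying $\eta$ to both sides) should collapse the $K$-dependent and $\phi$-dependent pieces and leave a scalar identity of the form $B(X)\,c = A(X)\,c$ with $c\neq 0$ a nonzero numerical constant depending on $n$, whence $B=A$.

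The main obstacle I anticipate is bookkeeping: the correction terms coming from $g(\phi X,\cdot)\xi$ in $\nabla$ and from the $2\eta(Z)g(\phi X,Y)\xi$ term in (\ref{3.5}) generate several $\phi$- and $\eta$-laden summands, and one must be careful that the antisymmetry $\varphi(X,Y)+\varphi(Y,X)=0$ and the identities $\phi\xi = 0$, $\eta(\phi X)=0$ are applied correctly so that spurious terms cancel. A secondary subtlety is ensuring the chosen contraction does not annihilate the $B$-term as well as the $A$-term; this is why contracting against $\xi$ in the $W$-slot (which keeps the bracket $[\eta(Z)Y-\eta(Y)Z]$ alive) and then tracing in $Y,Z$ is the natural route. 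Once the scalar identity is isolated, the conclusion $B=A$ is immediate since $B\neq 0$ forces the common coefficient to be nonzero, and the equality of the $1$-forms follows for all $X$.
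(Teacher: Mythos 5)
Your plan is essentially the paper's own argument: substitute $\xi$ into the recurrence relation (\ref{4.4}), use (\ref{2.9}) and (\ref{3.8}) to turn the $A(X)R(\cdot,\cdot)\xi$ term into a multiple of the bracket $[\eta(Z)\xi-Z]$, and compare with the direct expansion of the covariant derivative. The paper streamlines the bookkeeping you worry about by setting \emph{both} $Y=\xi$ and $W=\xi$ at once, so that $(\nabla_{X}R)(\xi,Z)\xi=0$ identically and the identity $\left(B(X)-A(X)\right)\left[\eta(Z)\xi-Z\right]=0$ follows with no trace needed.
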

\begin{proof}
Replacing $Y$ and $W$ by $\xi$ in (\ref{4.4}) and using (\ref{2.2}) and (\ref{2.4}), we obtain
\begin{equation}
\label{4.6}
(\nabla_{X}R)(\xi,Z)\xi=A(X)R(\xi,Z)\xi+B(X)\left[ \eta(Z)\xi-Z\right].
\end{equation}
In consequence of (\ref{2.9}) and (\ref{3.8}), (\ref{4.6}) becomes
\begin{equation}
\label{4.7}
(\nabla_{X}R)(\xi,Z)\xi=\left( B(X)-A(X)\right) \left[ \eta(Z)\xi-Z\right].
\end{equation}
It can be easily seen that
\begin{equation}
\label{4.8}
(\nabla_{X}R)(\xi,Z)\xi=\nabla_{X}R(\xi,Z)\xi-R(\nabla_{X}\xi,Z)\xi-R(\xi,\nabla_{X}Z)\xi-R(\xi,Z)\nabla_{X}\xi.
\end{equation}
From (\ref{2.2}), (\ref{2.9}), (\ref{3.8}) and (\ref{4.8}), it follows that
\begin{equation}
\label{4.9}
(\nabla_{X}R)(\xi,Z)\xi=0.
\end{equation}
In view of (\ref{4.7}) and (\ref{4.9}), we get
\begin{equation}
\label{4.10}
\left( B(X)-A(X)\right) \left[ \eta(Z)\xi-Z\right]=0.
\end{equation}
Since $Z\neq{\eta(Z)\xi}$ in general, therefore $B=A$. 
\end{proof}
\begin{thm}
If an $n-$dimensional generalized Ricci-recurrent Kenmotsu manifold $M_{n}$ admitting a semi-symmetric non-metric connection $\nabla$, then $B=A$ holds on $M_{n}$.
\end{thm}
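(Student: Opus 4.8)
The plan is to mimic the structure of the proof of Theorem 4.1, but now working with the Ricci tensor $\tilde S$ of the semi-symmetric non-metric connection instead of the curvature tensor $R$. First I would substitute $Z=\xi$ in the defining relation \eqref{4.5} of a generalized Ricci-recurrent Kenmotsu manifold with respect to $\nabla$. Using \eqref{3.9} together with \eqref{2.11} (which gives $\tilde S(Y,\xi)=S(Y,\xi)=-(n-1)\eta(Y)$) and \eqref{2.4}, the right-hand side becomes $-(n-1)A(X)\eta(Y)+(n-1)B(X)\eta(Y)=(n-1)\bigl(B(X)-A(X)\bigr)\eta(Y)$.

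Next I would compute the left-hand side $(\nabla_X\tilde S)(Y,\xi)$ directly from the definition of the covariant derivative,
\begin{equation}
(\nabla_X\tilde S)(Y,\xi)=X\bigl(\tilde S(Y,\xi)\bigr)-\tilde S(\nabla_X Y,\xi)-\tilde S(Y,\nabla_X\xi).
\end{equation}
Here I would use $\tilde S(Y,\xi)=-(n-1)\eta(Y)$ from \eqref{3.9} and \eqref{2.11}, the connection formula \eqref{3.3} which gives $\nabla_X\xi=D_X\xi+g(\phi X,\xi)\xi=D_X\xi$ (since $\phi\xi=0$), hence $\nabla_X\xi=X-\eta(X)\xi$ by \eqref{2.8}, and \eqref{2.12} for $(D_X\eta)(Y)$. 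A short computation, again invoking \eqref{3.9} and \eqref{2.11} to evaluate $\tilde S(\nabla_X Y,\xi)$ and $\tilde S(Y,\nabla_X\xi)$, should collapse the whole expression to $0$; the algebra is of the same routine type as the step producing \eqref{4.9} in the previous theorem.

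Equating the two sides yields $(n-1)\bigl(B(X)-A(X)\bigr)\eta(Y)=0$ for all $X,Y$. Choosing $Y=\xi$ (so $\eta(Y)=1$) and using $n>3$ forces $B(X)=A(X)$ for every $X$, i.e. $B=A$ on $M_n$. The only mildly delicate point is the careful bookkeeping in the expansion of $(\nabla_X\tilde S)(Y,\xi)$ — in particular making sure the non-metric term $g(\phi X,Y)\xi$ coming from \eqref{3.3} does not contribute, which it does not because it only affects $\nabla_X Y$ through a $\xi$-component and $\tilde S(\xi,\xi)=-(n-1)$ is constant so its derivative vanishes in the combination — but this is purely mechanical rather than conceptual.
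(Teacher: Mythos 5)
Your overall strategy is the paper's: substitute $Z=\xi$ into (\ref{4.5}), expand $(\nabla_{X}\tilde{S})(Y,\xi)$ from the definition of the covariant derivative, equate the two sides, and specialize. However, your central computational claim --- that $(\nabla_{X}\tilde{S})(Y,\xi)$ collapses to $0$ for all $X$ and $Y$ --- is false, and the parenthetical reason you give for discarding the non-metric term is precisely where it fails. The term $g(\phi X,Y)\xi$ from (\ref{3.3}) enters $\tilde{S}(\nabla_{X}Y,\xi)$ \emph{undifferentiated}, as $g(\phi X,Y)\,\tilde{S}(\xi,\xi)=-(n-1)g(\phi X,Y)$, which does not vanish; and the term $\tilde{S}(Y,\nabla_{X}\xi)=\tilde{S}(Y,X)-\eta(X)\tilde{S}(Y,\xi)$ leaves a full Ricci term $\tilde{S}(Y,X)=S(X,Y)-(n-1)g(\phi X,Y)$ that nothing else cancels. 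Carrying the expansion through with (\ref{2.8}), (\ref{2.11}), (\ref{2.12}), (\ref{3.3}) and (\ref{3.9}) gives the paper's (\ref{4.13}),
\begin{equation}
(\nabla_{X}\tilde{S})(Y,\xi)=-(n-1)g(X,Y)+2(n-1)g(\phi{X},Y)-S(X,Y),
\end{equation}
which is not identically zero (were it zero, symmetrizing in $X,Y$ would force $g(\phi X,Y)=0$). The analogy you draw with (\ref{4.9}) in Theorem 4.1 breaks down because there both slots being evaluated carry $\xi$, whereas here the slot $Y$ is free.

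The conclusion is nonetheless recoverable, and this is exactly what the paper does: equate the nonzero expression (\ref{4.13}) with the right-hand side $(n-1)\eta(Y)\bigl[B(X)-A(X)\bigr]$ to get (\ref{4.14}), and only \emph{then} set $Y=\xi$; by (\ref{2.2}), (\ref{2.4}) and (\ref{2.11}) the right-hand side of (\ref{4.13}) vanishes at $Y=\xi$, yielding $B=A$. So your final specialization $Y=\xi$ is the correct move, but the identity you feed into it --- ``$(n-1)\bigl(B(X)-A(X)\bigr)\eta(Y)=0$ for all $X,Y$'' --- is not established by your argument and is in fact not what the expansion gives for general $Y$. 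You need to repair the intermediate computation before the last step is legitimate.
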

\begin{proof}
Replacing $Z$ by $\xi$ in (\ref{4.5}) and then using (\ref{2.4}), (\ref{2.11}) and (\ref{3.9}), we obtain
\begin{equation}
\label{4.11}
(\nabla_{X}\tilde{S})(Y,\xi)=(n-1)\eta(Y)\left[ B(X)-A(X)\right].
\end{equation}
It is obvious that
\begin{equation}
\label{4.12}
(\nabla_{X}\tilde{S})(Y,\xi)=\nabla_{X}\tilde{S}(Y,\xi)-\tilde{S}(\nabla_{X}Y,\xi)-\tilde{S}(Y,\nabla_{X}\xi).
\end{equation}
In consequence of (\ref{2.2}), (\ref{2.4}), (\ref{2.11}), (\ref{2.12}), (\ref{3.3}) and (\ref{3.9}), (\ref{4.12}) becomes
\begin{equation}
\label{4.13}
(\nabla_{X}\tilde{S})(Y,\xi)=-(n-1)g(X,Y)+2(n-1)g(\phi{X},Y)-S(X,Y).
\end{equation}
From (\ref{4.11}) and (\ref{4.13}), it follows that
\begin{equation}
\label{4.14}
(n-1)\eta(Y)\left[ B(X)-A(X)\right]=-(n-1)g(X,Y)+2(n-1)g(\phi{X},Y)-S(X,Y).
\end{equation}
Putting $Y=\xi$ in (\ref{4.14}) and using (\ref{2.2}), (\ref{2.4}) and (\ref{2.11}), we obtain $B=A$. 
\end{proof}

\section{ Weakly symmetric Kenmotsu manifolds }
{\defi A non-flat $n-$dimensional differentiable manifold $M_{n}$, $(n>3)$, is called pseudo symmetric \cite{chaki} if there is a $1-$form $A$ on $M_{n}$ such that
\begin{eqnarray}
\label{5.1}
(D_{X}K)(Y,Z)W&=&2A(X)K(Y,Z)W+A(Y)K(X,Z)W+A(Z)K(Y,X)W\nonumber\\&&+A(W)K(Y,Z)X+g(K(Y,Z)W,X)\rho_{1},
\end{eqnarray}
where $D$ is the Levi-Civita connection and $X$, $Y$, $Z$ and $W$ are arbitrary vector fields on $M_{n}$. The vector field $\rho_{1}$ associated with the $1-$form $A$ is defined by $A(X)=g(X,\rho_{1})$.}

	{\defi A non-flat $n-$dimensional differentiable manifold $M_{n}$, $(n>3)$, is called weakly symmetric \cite{tamassy1, tamassy2} if there are $1-$forms $A$, $B$, $C$ and $D$ on $M_{n}$ such that
\begin{eqnarray}
\label{5.2}
(D_{X}K)(Y,Z)W&=&A(X)K(Y,Z)W+B(Y)K(X,Z)W+C(Z)K(Y,X)W\nonumber\\&&+D(W)K(Y,Z)X+g(K(Y,Z)W,X)\sigma,
\end{eqnarray}
where $X$, $Y$, $Z$, $W$ are arbitrary vector fields on $M_{n}$. The vector field $\sigma$ associated with the $1-$form $p$ is defined as $p(X)=g(X,\sigma)$. A weakly symmetric manifold $M_{n}$ is said to be pseudo symmetric if $B=C=D=A$, $\sigma=\rho_{1}$ and $A$ is replaced by $2A$, locally symmetric if $A=B=C=D=0$ and $\sigma=0$. A weakly symmetric manifold is said to be proper if at least one of the $1-$forms $A$, $B$, $C$ and $D$ is non zero or $\sigma{\neq}0$.}

	{\defi A non-flat $n-$dimensional differentiable manifold $M_{n}$, $(n>3)$, is called weakly Ricci-symmetric \cite{tamassy1, tamassy2} if there are $1-$forms $\alpha$, $\beta$ and $\gamma$ on $M_{n}$ such that
\begin{equation}
\label{5.3}
(D_{X}S)(Y,Z)=\alpha(X)S(Y,Z)+\beta(Y)S(X,Z)+\gamma(Z)S(X,Y),
\end{equation}
where $X$, $Y$ and $Z$ are arbitrary vector fields on $M_{n}$. A weakly Ricci-symmetric manifold $M_{n}$ is called pseudo Ricci-symmetric if $\alpha=\beta=\gamma$.}
Contracting (\ref{5.2}) with respect to $Y$, we get
\begin{eqnarray}
\label{5.4}
(D_{X}S)(Z,W)&=&A(X)S(Z,W)+B(K(X,Z)W)+C(Z)S(W,X)\nonumber\\&&+D(W)S(X,Z)+p(K(X,W)Z),
\end{eqnarray}
where $p$ is defined as $p(X)=g(X,\sigma)$ for arbitrary vector field $X$. The author \cite{add3} studied the properties of weakly and weakly Ricci symmetric manifolds with examples.

Similarly we define the following definitions:

	{\defi A non-flat $n-$dimensional differentiable manifold $M_{n}$, $(n>3)$, is called weakly symmetric with respect to the semi-symmetric non-metric connection $\nabla$ if there are $1-$forms $A$, $B$, $C$ and $D$ on $M_{n}$ such that
\begin{eqnarray}
\label{5.5}
(\nabla_{X}R)(Y,Z)W&=&A(X)R(Y,Z)W+B(Y)R(X,Z)W+C(Z)R(Y,X)W\nonumber\\&&+D(W)R(Y,Z)X+g(R(Y,Z)W,X)\sigma,
\end{eqnarray}
where $X$, $Y$, $Z$, $W$ are arbitrary vector fields on $M_{n}$ and the $1-$forms $A$, $B$, $C$, $D$ and the vector field $\sigma$ are defined previously.
	{\defi A non-flat $n-$dimensional differentiable manifold $M_{n}$, $(n>3)$, is called weakly Ricci-symmetric with respect to the semi-symmetric non-metric connection $\nabla$ if there are $1-$forms $\alpha$, $\beta$ and $\gamma$ on $M_{n}$ such that
\begin{equation}
\label{5.6}
(\nabla_{X}\tilde{S})(Y,Z)=\alpha(X)\tilde{S}(Y,Z)+\beta(Y)\tilde{S}(X,Z)+\gamma(Z)\tilde{S}(X,Y),
\end{equation}
where $X$, $Y$, $Z$ are arbitrary vector fields on $M_{n}$.

Contracting (\ref{5.5})with $Y$, we get
\begin{eqnarray}
\label{5.7}
(\nabla_{X}\tilde{S})(Z,W)&=&A(X)\tilde{S}(Z,W)+B(R(X,Z)W)+C(Z)\tilde{S}(W,X)\nonumber\\&&+D(W)\tilde{S}(X,Z)+p(R(X,W)Z),
\end{eqnarray}
where $p$ is defined as $p(X)=g(X,\sigma)$ for arbitrary vector field $X$. 

	$\ddot{O}$zg$\ddot{u}$r \cite{ozgur1} considered weakly symmetric and weakly Ricci-symmetric Kenmotsu manifolds and proved the following theorems:
\begin{thm} There is no weakly symmetric Kenmotsu manifold $M$, $(n>3)$, unless $A+C+D$ is everywhere zero.
\end{thm}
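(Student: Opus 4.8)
The plan is to evaluate the defining relation (\ref{5.2}) of weak symmetry along the characteristic vector field $\xi$, where the Kenmotsu identities (\ref{2.8})--(\ref{2.12}) collapse the Riemannian curvature to an explicit algebraic expression, and then to peel off the four $1$-forms $A,B,C,D$ one at a time by testing the resulting identities on vectors orthogonal to $\xi$; the hypothesis $n>3$ is exactly what makes these separations possible.

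First I would record two auxiliary identities. Expanding the tensor derivative $(D_{X}K)(Y,Z)W$ by the Leibniz rule and inserting $K(Y,Z)\xi=\eta(Y)Z-\eta(Z)Y$ from (\ref{2.9}), $D_{X}\xi=X-\eta(X)\xi$ from (\ref{2.8}), and $(D_{X}\eta)(Y)=g(X,Y)-\eta(X)\eta(Y)$ from (\ref{2.12}), one checks that every term carrying a covariant derivative of an argument drops out, leaving $(D_{X}K)(Y,Z)\xi=g(X,Y)Z-g(X,Z)Y-K(Y,Z)X$ and, by an entirely analogous computation, $(D_{X}K)(Y,\xi)W=g(Y,W)X-g(X,W)Y+K(X,Y)W$.

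Next I would set $W=\xi$ in (\ref{5.2}) and substitute the first of these identities, using also $K(\xi,Z)X=\eta(X)Z-g(X,Z)\xi$ from (\ref{2.10}); this produces a vector identity in $X,Y,Z$. Taking $X=Y=\xi$ collapses it to $A(\xi)+B(\xi)+D(\xi)=0$. Taking $X=\xi$ alone and then letting $Y$ and $Z$ be orthogonal to $\xi$ leaves $B(Y)Z-C(Z)Y=0$; since $n>3$ the subspace $\xi^{\perp}$ has dimension at least three, so $Y$ and $Z$ can be chosen linearly independent there, which forces $B$ and $C$ to vanish on $\xi^{\perp}$, i.e. $B=B(\xi)\,\eta$ and $C=C(\xi)\,\eta$. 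Taking instead $Y=\xi$ with $X$ and $Z$ orthogonal to $\xi$ gives $A(X)Z+C(Z)X+g(X,Z)\sigma=D(\xi)g(X,Z)\xi$; choosing $X\perp Z$ inside $\xi^{\perp}$ (possible because $n-2\geq2$) makes $A$ vanish on $\xi^{\perp}$ as well, so $A=A(\xi)\,\eta$. Then I would run the parallel reduction with $Z=\xi$ in (\ref{5.2}), using the second auxiliary identity together with $K(\xi,Y)W=\eta(W)Y-g(Y,W)\xi$: the specialisation $X=\xi$ followed by $W=\xi$ gives $A(\xi)+C(\xi)+D(\xi)=0$, and the specialisation $X=\xi$ with $Y,W\in\xi^{\perp}$, $Y\perp W$, forces $D$ to vanish on $\xi^{\perp}$, so $D=D(\xi)\,\eta$.

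At this point each of $A,B,C,D$ is a scalar multiple of $\eta$ and $A(\xi)+C(\xi)+D(\xi)=0$, whence $A(X)+C(X)+D(X)=(A(\xi)+C(\xi)+D(\xi))\eta(X)=0$ for every vector field $X$; thus a weakly symmetric Kenmotsu manifold with $n>3$ can exist only if $A+C+D$ is everywhere zero. I expect the main difficulty to be organisational rather than computational: because $A,B,C,D$ are attached to different arguments of (\ref{5.2}), no single specialisation controls more than the value of one of them at $\xi$, so the $W=\xi$ and $Z=\xi$ reductions genuinely have to be run side by side, and one must verify that the degenerate possibilities that could spoil the concluding linear algebra — $\sigma$ collinear with $\xi$, or a vanishing scalar factor such as $1+D(\xi)$ — cause no trouble, which is once more where the hypothesis $n>3$, guaranteeing enough linearly independent vectors in $\xi^{\perp}$, is used.
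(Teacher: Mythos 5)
Your argument is correct, and I verified the two auxiliary identities $(D_{X}K)(Y,Z)\xi=g(X,Y)Z-g(X,Z)Y-K(Y,Z)X$ and $(D_{X}K)(Y,\xi)W=g(Y,W)X-g(X,W)Y+K(X,Y)W$ as well as the subsequent specialisations. Note, however, that the paper states this particular theorem as a quoted result of \"Ozg\"ur and gives no proof of it; the proof it does supply is for the analogous statement with the semi-symmetric non-metric connection, and that proof follows a genuinely different route from yours. There, the weak-symmetry condition (\ref{5.5}) is first contracted over $Y$ to obtain the Ricci-level identity (\ref{5.7}), the derivative $(\nabla_{X}\tilde{S})(\cdot,\xi)$ is computed once in closed form (the analogue of (\ref{4.13})), and then the substitutions $W=\xi$, $Z=\xi$, $X=\xi$ produce a handful of scalar equations ((\ref{5.10}), (\ref{5.13})--(\ref{5.18})) which are added together to yield $A(X)+C(X)+D(X)=0$. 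You instead stay at the level of the uncontracted curvature condition, and peel off the $1$-forms by exploiting linear independence of vectors in $\xi^{\perp}$ (which is where $n>3$ enters for you, versus entering through $Z\neq\eta(Z)\xi$ and the Ricci identities in the paper's scheme). Your approach costs two full Leibniz-rule computations of $D_{X}K$ but buys strictly more: it shows that each of $A$, $B$, $C$, $D$ is individually proportional to $\eta$, from which $A+C+D=0$ drops out of the single scalar relation $A(\xi)+C(\xi)+D(\xi)=0$, whereas the contraction argument only ever controls the sum. Both are valid proofs of the stated theorem.
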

\begin{thm}
 There is no weakly Ricci-symmetric Kenmotsu manifold $M$, $(n>3)$, unless $\alpha+\beta+\gamma$ is everywhere zero.
\end{thm}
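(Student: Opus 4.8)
\emph{Proof plan.} The strategy is the usual one for non-existence results of this kind: assuming $M_n$ is a weakly Ricci-symmetric Kenmotsu manifold, i.e.\ that (\ref{5.3}) holds for some $1$-forms $\alpha,\beta,\gamma$, I would feed the characteristic vector field $\xi$ into the defining relation and exploit the rigidity $S(X,\xi)=-(n-1)\eta(X)$ of the Ricci tensor of a Kenmotsu manifold. The upshot will be that each of $\alpha,\beta,\gamma$ is a multiple of $\eta$, with coefficients tied by a linear system whose only consequence for their sum is that it vanishes.

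The first step is a purely computational lemma. Expanding
\[
(D_X S)(Y,\xi)=X\big(S(Y,\xi)\big)-S(D_XY,\xi)-S(Y,D_X\xi)
\]
and substituting $S(Y,\xi)=-(n-1)\eta(Y)$ from (\ref{2.11}), $D_X\xi=X-\eta(X)\xi$ from (\ref{2.8}), and $(D_X\eta)(Y)=g(X,Y)-\eta(X)\eta(Y)$ from (\ref{2.12}), the terms carrying $\eta(D_XY)$ and the terms $\eta(X)\eta(Y)$ cancel and one is left with
\[
(D_X S)(Y,\xi)=-(n-1)g(X,Y)-S(X,Y).
\]
Since $S$ is symmetric, the same identity holds with the two slots interchanged, $(D_X S)(\xi,Z)=-(n-1)g(X,Z)-S(X,Z)$.

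The second step is to put $Z=\xi$ in (\ref{5.3}), use (\ref{2.11}) on the right-hand side, and compare with the lemma; this gives
\[
\big(1+\gamma(\xi)\big)S(X,Y)=-(n-1)g(X,Y)+(n-1)\alpha(X)\eta(Y)+(n-1)\beta(Y)\eta(X).
\]
Setting $Y=\xi$ and then $X=\xi$ in this relation and using (\ref{2.4}), (\ref{2.11}), the factor $1+\gamma(\xi)$ is carried onto $S(X,\xi)=-(n-1)\eta(X)$, the common factor $n-1\neq 0$ cancels, and one obtains
\[
\alpha(X)=-\big(\beta(\xi)+\gamma(\xi)\big)\eta(X),\qquad \beta(X)=-\big(\gamma(\xi)+\alpha(\xi)\big)\eta(X).
\]
Running the identical argument with $Y=\xi$ in (\ref{5.3}) instead of $Z=\xi$, and then putting $X=\xi$, produces in the same manner $\gamma(X)=-\big(\alpha(\xi)+\beta(\xi)\big)\eta(X)$.

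Finally, evaluating any one of these three relations at $X=\xi$ and using $\eta(\xi)=1$ forces $\alpha(\xi)+\beta(\xi)+\gamma(\xi)=0$; adding the three relations then yields $(\alpha+\beta+\gamma)(X)=-2\big(\alpha(\xi)+\beta(\xi)+\gamma(\xi)\big)\eta(X)=0$ for all $X$, which is the assertion. I do not anticipate a genuine obstacle: the only care required is the bookkeeping in the covariant-derivative expansions along $\xi$, and the observation that — in contrast to some related computations — one never has to divide by $1+\gamma(\xi)$ or $1+\beta(\xi)$, so the conclusion holds unconditionally once (\ref{5.3}) is assumed.
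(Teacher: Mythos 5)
Your proof is correct. Note first that the paper itself does not prove this particular statement: it is quoted from \"Ozg\"ur's paper \cite{ozgur1}, and what the paper actually proves is the analogue for the semi-symmetric non-metric connection $\nabla$ (its last theorem). Your argument is, in substance, the same strategy as that proof transplanted back to the Levi-Civita setting: the key lemma $(D_XS)(Y,\xi)=-(n-1)g(X,Y)-S(X,Y)$ is exactly the Levi-Civita counterpart of the paper's identity (\ref{4.13}), the substitution $Z=\xi$ into (\ref{5.3}) followed by $X=Y=\xi$ gives $\alpha(\xi)+\beta(\xi)+\gamma(\xi)=0$ just as in (\ref{5.20})--(\ref{5.21}), and the single-variable substitutions give $\alpha$ and $\beta$ proportional to $\eta$ as in (\ref{5.22})--(\ref{5.23}). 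The one genuine structural difference is in how you pin down $\gamma$: the paper computes $(\nabla_\xi\tilde{S})(\xi,X)=0$ directly and feeds it back into the defining relation to get $\gamma(X)=\gamma(\xi)\eta(X)$, whereas you instead substitute $Y=\xi$ into (\ref{5.3}) and exploit the symmetry of $S$ (hence of $(D_XS)(\cdot,\cdot)$) to rerun the same argument with the roles of $\beta$ and $\gamma$ interchanged. Your route is slightly more economical and more symmetric in the three $1$-forms, and your closing remark that one never needs to divide by $1+\gamma(\xi)$ or $1+\beta(\xi)$ is accurate and worth making explicit, since the factor $(1+\gamma(\xi))$ multiplies $S(X,\xi)=-(n-1)\eta(X)$ and simply cancels against the corresponding term on the other side.
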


	Sular \cite{sular} considered weakly symmetric and weakly Ricci-symmetric Kenmotsu manifolds with respect to the semi-symmetric metric connection and proved the following results:
\begin{thm}
There is no weakly symmetric Kenmotsu manifold $M$ admitting a semi-symmetric metric connection, $(n>3)$, unless $A+C+D$ is everywhere zero.
\end{thm}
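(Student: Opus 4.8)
\medskip
\noindent\textbf{Outline of a proof.}\quad
The plan is to run, for the semi-symmetric metric connection, the scheme behind the corresponding Levi-Civita result (Theorem~\ref{5.2}\,--\,rather, the first weakly symmetric theorem of Section~5) and behind the proofs of Theorems~\ref{thm}: substitute $\xi$ into the defining identity in every admissible slot, invoke the Kenmotsu structure equations, and contract over a local frame. Denote by $\overline{\nabla}$ the semi-symmetric metric connection, by $\overline{R}$ its curvature tensor and by $\overline{S}$ its Ricci tensor. The preliminary step is to record the Kenmotsu analogues of (\ref{3.5})--(\ref{3.9}) for $\overline{\nabla}$: a formula expressing $\overline{R}$ through the Riemannian curvature $K$; the value of $\overline{R}(X,Y)\xi$ (a scalar multiple of $K(X,Y)\xi=\eta(X)Y-\eta(Y)X$); the value of $\overline{S}(Y,\xi)$; and the value of $\overline{\nabla}_{X}\xi$. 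All of these follow from (\ref{2.2}), (\ref{2.4}), (\ref{2.7}), (\ref{2.8}) and (\ref{2.12}) in the same way that (\ref{3.5})--(\ref{3.9}) were obtained in Section~3. With these in place the hypothesis reads
\begin{eqnarray}
(\overline{\nabla}_{X}\overline{R})(Y,Z)W&=&A(X)\overline{R}(Y,Z)W+B(Y)\overline{R}(X,Z)W+C(Z)\overline{R}(Y,X)W\nonumber\\&&+D(W)\overline{R}(Y,Z)X+g(\overline{R}(Y,Z)W,X)\sigma,
\end{eqnarray}
together with its once-contracted (Ricci) form, obtained by summing over $Y$ exactly as (\ref{5.7}) was obtained from (\ref{5.5}).

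The argument then proceeds by a chain of specializations. First I would put $W=\xi$: expanding $(\overline{\nabla}_{X}\overline{R})(Y,Z)\xi$ by the Leibniz rule as in (\ref{4.8}) and (\ref{4.12}) and using the formulas for $\overline{R}(X,Y)\xi$ and $\overline{\nabla}_{X}\xi$ turns the left side into an expression built from $\overline{R}(Y,Z)X$, while on the right the factors $\overline{R}(\cdot,\cdot)\xi$ and the term $g(\overline{R}(Y,Z)\xi,X)\sigma$ collapse by the same identities. Next I would set, in turn, $X=\xi$, then $Y=\xi$, then $Z=\xi$, and, in parallel, pair each resulting identity once with $\xi$ (via $g(\xi,\cdot)=\eta$) and once contract it over an orthonormal frame; using $\overline{S}(Y,\xi)$ together with (\ref{2.9})--(\ref{2.11}), every such step yields a scalar relation among the $1$-forms. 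The expected outcome is a list of relations forcing each of $A$, $B$, $C$, $D$ to be a scalar multiple of $\eta$ and $\sigma$ to be a multiple of $\xi$ (here $n>3$ is used, to divide by $n-2$), together with one scalar identity among the proportionality factors that reads $A(\xi)+C(\xi)+D(\xi)=0$. Combining these gives $A+C+D=\big(A(\xi)+C(\xi)+D(\xi)\big)\eta\equiv 0$, i.e.\ a weakly symmetric Kenmotsu manifold admitting a semi-symmetric metric connection cannot be proper unless $A+C+D$ vanishes everywhere.

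The hard part is organizational rather than conceptual. Unlike the Riemannian curvature $K$, the curvature tensor $\overline{R}$ of a semi-symmetric metric connection does not satisfy the pair symmetry $g(\overline{R}(X,Y)Z,W)=g(\overline{R}(Z,W)X,Y)$, and correspondingly $\overline{S}$ need not be symmetric; hence every interchange of index pairs and every contraction that is harmless for $K$ in the Levi-Civita proof must be re-checked so that the Kenmotsu analogues of (\ref{3.5})--(\ref{3.9}) still make the computation close. A related nuisance is that the term $g(\overline{R}(Y,Z)W,X)\sigma$ survives the first specialization $W=\xi$ and has to be eliminated later by playing the identity paired with $\xi$ against the identity paired with a generic vector field. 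Once the correct analogues of (\ref{3.5})--(\ref{3.9}) for $\overline{\nabla}$ are available, what remains is a long but entirely routine sequence of substitutions and traces, formally parallel to the proofs of Theorems~\ref{thm} and of the Levi-Civita case.
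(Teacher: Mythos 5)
First, a point of orientation: the paper does not actually prove this statement. It is quoted as background from Sular's article, and the only proofs the paper itself supplies are for the analogous theorems with the semi-symmetric \emph{non-metric} connection $\nabla$ (the final two theorems of Section 5). So your outline can only be measured against that in-paper analogue, and against it your overall strategy is the right one: pass to the once-contracted (Ricci-type) identity, substitute $\xi$ into each slot in turn, evaluate both sides using the Kenmotsu identities and the connection's analogues of (\ref{3.5})--(\ref{3.9}), and assemble the resulting scalar relations, the first of which is $A(\xi)+C(\xi)+D(\xi)=0$.

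There are, however, two substantive problems. The first is that your predicted mechanism --- ``a list of relations forcing each of $A$, $B$, $C$, $D$ to be a scalar multiple of $\eta$ and $\sigma$ to be a multiple of $\xi$'' --- is stronger than what the method delivers and is not how the conclusion is reached. In the paper's version of this argument the individual relations, e.g.\ (\ref{5.15}) and (\ref{5.18}), express $(n-1)D(X)$ as $p(X)$ plus a multiple of $\eta(X)$, and $B(X)$ as $(n-1)C(X)$ plus a multiple of $\eta(X)$; no single $1$-form is shown to be proportional to $\eta$, and there is no reason it should be. The theorem follows instead by \emph{adding} the relations coming from the $Z=\xi$ and $X=\xi$ specializations so that the terms $B(X)$ and $p(X)$ cancel in pairs and the $\eta(X)$-coefficients collapse via $A(\xi)+C(\xi)+D(\xi)=0$, leaving $(n-1)\bigl(A(X)+C(X)+D(X)\bigr)=0$. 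If you set out to first prove proportionality to $\eta$, you will get stuck. The second problem is that the proposal is only a plan: the analogues of (\ref{3.5})--(\ref{3.9}) for the metric connection and the chain of substitutions are asserted to ``close'' but never computed, and since the entire content of the theorem lives in those cancellations, the outline does not yet constitute a proof. (One mitigating remark: in a Kenmotsu manifold $(D_Y\eta)(Z)=g(Y,Z)-\eta(Y)\eta(Z)$ is symmetric, so the Ricci tensor of the semi-symmetric metric connection is in fact symmetric here; the asymmetry you worry about is less of an obstacle than in the general Riemannian setting.)
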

\begin{thm}
 There is no weakly Ricci-symmetric Kenmotsu manifold $M$ admitting a semi-symmetric metric connection, $(n>3)$, unless $\alpha+\beta+\gamma$ is everywhere zero.
\end{thm}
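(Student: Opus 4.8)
The plan is to reconstruct Sular's argument, which is the semi-symmetric-metric-connection counterpart of \"Ozg\"ur's proof of the Levi-Civita statement stated just above. Write $\nabla$ for the semi-symmetric metric connection on the Kenmotsu manifold, $\nabla_X Y = D_X Y + \eta(Y)X - g(X,Y)\xi$, let $\tilde{S}$ denote its Ricci tensor, and read the weak Ricci-symmetry hypothesis as $(\nabla_X\tilde{S})(Y,Z)=\alpha(X)\tilde{S}(Y,Z)+\beta(Y)\tilde{S}(X,Z)+\gamma(Z)\tilde{S}(X,Y)$ with respect to this $\nabla$. Before doing any substitution one records two elementary facts. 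From (\ref{2.8}) together with $\eta(\xi)=1$ one gets $\nabla_X\xi = 2\bigl(X-\eta(X)\xi\bigr)$. And the Ricci tensor of the semi-symmetric metric connection on a Kenmotsu manifold, with one slot filled by $\xi$, is a fixed scalar multiple of $\eta$: $\tilde{S}(Y,\xi)=c\,\eta(Y)$ for a constant $c$ depending only on $n$ (the analogue of (\ref{3.9}), obtained from the known $\tilde{S}$--$S$ relation and (\ref{2.11})).

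First I would set $Z=\xi$ in the weak Ricci-symmetry relation, so its right-hand side collapses to $c\,\alpha(X)\eta(Y)+c\,\beta(Y)\eta(X)+\gamma(\xi)\tilde{S}(X,Y)$. Independently I would expand the left-hand side by the Leibniz rule,
\[
(\nabla_X\tilde{S})(Y,\xi)=X\bigl(\tilde{S}(Y,\xi)\bigr)-\tilde{S}(\nabla_X Y,\xi)-\tilde{S}(Y,\nabla_X\xi),
\]
and insert $\tilde{S}(\cdot,\xi)=c\,\eta(\cdot)$, $\nabla_X\xi=2(X-\eta(X)\xi)$, (\ref{2.12}) and the $\tilde{S}$--$S$ relation; this presents $(\nabla_X\tilde{S})(Y,\xi)$ as an explicit linear combination of $g(X,Y)$, $\eta(X)\eta(Y)$ and $\tilde{S}(X,Y)$. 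Equating the two computations yields a master identity $(\star)$ relating $\tilde{S}(X,Y)$, $g(X,Y)$, $\eta(X)\eta(Y)$ and the $1$-forms $\alpha,\beta$ (and the scalar $\gamma(\xi)$).

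Now come the specializations of $(\star)$. Putting $Y=\xi$ annihilates every term but one proportional to $\alpha(X)$, so $\alpha=\alpha(\xi)\eta$; putting $X=\xi$ gives $\beta=\beta(\xi)\eta$ the same way. Substituting these back --- most transparently into the $Y=\xi$ reading of the weak Ricci-symmetry relation itself, used with the symmetry $\tilde{S}(\xi,Z)=\tilde{S}(Z,\xi)$ --- leaves a lone term of the form $\gamma(Z)\eta(X)$ set against an expression symmetric in $X,Z$; hence $\gamma(Z)\eta(X)=\gamma(X)\eta(Z)$, and taking $Z=\xi$ gives $\gamma=\gamma(\xi)\eta$. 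Finally the scalar identities produced by the $X=\xi$ and $Y=\xi$ substitutions combine linearly to force $\alpha(\xi)+\beta(\xi)+\gamma(\xi)=0$, whence $\alpha+\beta+\gamma=\bigl(\alpha(\xi)+\beta(\xi)+\gamma(\xi)\bigr)\eta$ vanishes identically on $M_n$; this is the assertion. The hypothesis $n>3$ enters only through the division by $n-1$ (and by a factor of the form $k+\gamma(\xi)$, with $k$ a positive constant, that appears in $(\star)$).

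The one step requiring real care is the honest evaluation of $(\nabla_X\tilde{S})(Y,\xi)$: throughout this computation one must use $\nabla$ and not $D$ --- in particular $\nabla_X\xi=2(X-\eta(X)\xi)$ in place of (\ref{2.8}) --- and keep the correction terms of the semi-symmetric metric connection straight, since precisely those terms are what makes Sular's conclusion differ in form from the Levi-Civita one. A minor secondary point: $(\star)$ pins down $\tilde{S}(X,Y)$ only up to the factor $k+\gamma(\xi)$; if that factor vanished, a further $\xi$-substitution in $(\star)$ would produce a relation between $g$ and $\eta\otimes\eta$ impossible for $n>3$, so that branch is vacuous. Everything after the master identity is a finite, routine chain of the substitutions $X=\xi$, $Y=\xi$, $Z=\xi$ together with linear algebra, so no further obstacle is expected.
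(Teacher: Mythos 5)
Note first that the paper itself gives no proof of this statement: it is quoted verbatim as a result of Sular \cite{sular}, and the only proof in the paper is of the analogous theorem for the semi-symmetric \emph{non-metric} connection (the final theorem of Section 5). Your reconstruction is correct and follows exactly the template of that proof: the substitution $Z=\xi$ plus a Leibniz-rule evaluation of $(\nabla_{X}\tilde{S})(Y,\xi)$ (using $\nabla_X\xi=2(X-\eta(X)\xi)$ and $\tilde{S}(Y,\xi)=-2(n-1)\eta(Y)$) yields the master identity, and the specializations $X=\xi$, $Y=\xi$ give $\alpha(\xi)+\beta(\xi)+\gamma(\xi)=0$, $\alpha=\alpha(\xi)\eta$, $\beta=\beta(\xi)\eta$, $\gamma=\gamma(\xi)\eta$, hence $\alpha+\beta+\gamma=0$. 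The only cosmetic divergence is in extracting $\gamma=\gamma(\xi)\eta$: you use the $Y=\xi$ reading of the defining relation together with the symmetry of $\tilde{S}$, whereas the paper (in its non-metric analogue) computes $(\nabla_{\xi}\tilde{S})(\xi,X)=0$ directly and feeds it back into the weak Ricci-symmetry condition; both are equally routine.
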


Now we consider the weakly symmetric and weakly Ricci-symmetric Kenmotsu manifolds admitting the semi-symmetric non-metric connection $\nabla$ and prove the following theorems:
\begin{thm}
Let $M_{n}$, $(n>3)$ be an $n-$dimensional weakly symmetric Kenmotsu manifold admitting a semi-symmetric non-metric connection $\nabla$ then there is no $M_{n}$, unless $A+C+D$ is everywhere zero.
\end{thm}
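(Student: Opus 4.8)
The plan is to work with the Ricci-contracted identity (\ref{5.7}) rather than the full curvature identity (\ref{5.5}), following the pattern of $\ddot{\mathrm O}$zg$\ddot{\mathrm u}$r and Sular but carrying along the extra $\phi$-terms produced by the non-metric connection. First I would put $W=\xi$ in (\ref{5.7}); call the resulting identity (M1). On the left I would use the computation already performed in (\ref{4.12})--(\ref{4.13}), namely $(\nabla_{X}\tilde S)(Z,\xi)=-(n-1)g(X,Z)+2(n-1)g(\phi X,Z)-S(X,Z)$. On the right I would simplify the terms $\tilde S(Z,\xi)$ and $\tilde S(\xi,X)$ to multiples of $\eta$ using (\ref{3.9}) and (\ref{2.11}), write $\tilde S(X,Z)=S(X,Z)+(n-1)g(\phi X,Z)$ from (\ref{3.6}), and evaluate $R(X,Z)\xi$ and $R(X,\xi)Z$ explicitly by (\ref{3.8}), (\ref{2.10}) and (\ref{3.5}). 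Thus (M1) becomes a single identity, valid for all $X$ and $Z$, in which the Ricci term cancels once $X=\xi$ is substituted.

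Next I would put $Z=\xi$ in (\ref{5.7}); call this (M2). Its left-hand side $(\nabla_{X}\tilde S)(\xi,W)$ is not covered by (\ref{4.13}) and must be computed afresh in the same way, using $\nabla_{X}\xi=X-\eta(X)\xi$, $(\nabla_{X}g)(\xi,W)=-g(\phi X,W)$ from (\ref{3.2}), and $\tilde S(\xi,\cdot)=-(n-1)\eta(\cdot)$; the outcome is $(\nabla_{X}\tilde S)(\xi,W)=-(n-1)g(X,W)-S(X,W)$. On the right I would use $R(X,\xi)W=-R(\xi,X)W$ together with (\ref{2.10}), (\ref{3.5}) and (\ref{3.6}). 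The point of introducing (M2) is that it is the only place where the $1$-form $D$ appears with a general argument, rather than merely as the scalar $D(\xi)$.

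Now I would specialise. Setting $X=\xi$ in (M1) makes the left side vanish by (\ref{2.4}) and (\ref{2.11}) and, after using (\ref{2.2}), (\ref{3.8}) and (\ref{3.9}), leaves a relation holding for all $Z$; its non-$\eta(Z)$ part reads ``$B-(n-1)C$ equals a smooth function times $\eta$'', while the further restriction $Z=\xi$ (together with $n>3$) gives $A(\xi)+C(\xi)+D(\xi)=0$. Setting $Z=\xi$ in (M1) gives ``$(n-1)A+B+p$ equals a smooth function times $\eta$'', and setting $X=\xi$ in (M2) gives ``$(n-1)D-p$ equals a smooth function times $\eta$''. Adding the second and third of these and subtracting the first eliminates $B$ and $p$ and leaves $(n-1)(A+C+D)=f\,\eta$ for some smooth function $f$; evaluating this at $\xi$ and using $A(\xi)+C(\xi)+D(\xi)=0$ forces $f\equiv 0$, hence $A+C+D\equiv 0$ on $M_{n}$. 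That is exactly the excluded situation, so the theorem follows.

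The step I expect to be the real obstacle is the non-metric bookkeeping. Because $\nabla g\neq 0$ by (\ref{3.2}) and $\tilde S$ is not symmetric (see (\ref{3.6})), the derivative $(\nabla_{X}\tilde S)(\xi,W)$ cannot be borrowed from the Levi-Civita or semi-symmetric metric case and every $g(\phi\cdot,\cdot)$ contribution must be tracked carefully through the substitutions; moreover one must genuinely use both specialisations $W=\xi$ and $Z=\xi$, since with only one of them the $1$-form $D$ is seen solely through the number $D(\xi)$ and the combination $A+C+D$ cannot be pinned down. Everything else is a finite computation using only the identities (\ref{2.2})--(\ref{2.12}) and (\ref{3.3})--(\ref{3.9}).
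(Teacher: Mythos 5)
Your proposal is correct and follows essentially the same route as the paper's proof: contract to (\ref{5.7}), specialize at $W=\xi$ and at $Z=\xi$, extract the relations ``$B-(n-1)C$'', ``$(n-1)A+B+p$'' and ``$(n-1)D-p$'' modulo $\eta$, and combine them with $A(\xi)+C(\xi)+D(\xi)=0$ exactly as in (\ref{5.10})--(\ref{5.18}). The one place you diverge --- computing $(\nabla_{X}\tilde S)(\xi,W)=-(n-1)g(X,W)-S(X,W)$ afresh rather than reusing (\ref{4.13}) --- is actually more careful than the paper, which in passing to (\ref{5.12}) silently identifies $(\nabla_{X}\tilde S)(\xi,W)$ with $(\nabla_{X}\tilde S)(W,\xi)$ even though $\tilde S$ is not symmetric; the discrepancy $2(n-1)g(\phi X,W)$ happens to vanish in every specialization subsequently used, so both versions reach the same conclusion.
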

\begin{proof}
Replacing $W$ by $\xi$ in (\ref{5.7}) and using (\ref{2.2}), (\ref{2.4}), (\ref{2.9}), (\ref{2.10}), (\ref{2.11}), (\ref{3.5}), (\ref{3.8}) and (\ref{3.9}) we obtain
\begin{eqnarray}
\label{5.8}
(\nabla_{X}\tilde{S})(Z,\xi)&=&-(n-1)A(X)\eta(Z)+\eta(X)B(Z)-\eta(Z)B(X)\nonumber\\&&
-(n-1)C(Z)\eta(X)+D(\xi)S(X,Z)-\eta(Z)p(X)\nonumber\\&&
+p(\xi)g(X,Z)+(n-1)D(\xi)g(\phi{X},Z)-p(\xi)g(\phi{X},Z).
\end{eqnarray}
From (\ref{4.13}) and (\ref{5.8}), it follows that
\begin{eqnarray}
\label{5.9}
&&-(n-1)g(X,Z)+2(n-1)g(\phi{X},Z)-S(X,Z)\nonumber\\&&
=-(n-1)A(X)\eta(Z)+\eta(X)B(Z)-\eta(Z)B(X)\nonumber\\&&
-(n-1)C(Z)\eta(X)+D(\xi)S(X,Z)-\eta(Z)p(X)\nonumber\\&&
+p(\xi)g(X,Z)+(n-1)D(\xi)g(\phi{X},Z)-p(\xi)g(\phi{X},Z).
\end{eqnarray}
Replacing $X$ and $Z$ by $\xi$ in (\ref{5.9}) and using (\ref{2.2}), (\ref{2.4}) and (\ref{2.11}), we get
\begin{equation}
\label{5.10}
A(\xi)+C(\xi)+D(\xi)=0.
\end{equation}
Putting $Z=\xi$ in (\ref{5.7}) and using (\ref{2.2}), (\ref{2.4}), (\ref{2.9}), (\ref{2.10}), (\ref{2.11}), (\ref{3.5}), (\ref{3.8}), we obtain
\begin{eqnarray}
\label{5.11}
(\nabla_{X}\tilde{S})(\xi,W)&=&-(n-1)A(X)\eta(W)+g(X,W)B(\xi)-\eta(W)B(X)+\eta(X)p(W)\nonumber\\&&
-g(\phi{X},W)B(\xi)+C(\xi)S(W,X)+(n-1)C(\xi)g(\phi{W},X)\nonumber\\&&
-(n-1)D(W)\eta(X)-\eta(W)p(X)+2g(\phi{X},W)p(\xi).
\end{eqnarray}
In consequence of (\ref{4.13}) and (\ref{5.11}), we have
\begin{eqnarray}
\label{5.12}
&&-(n-1)g(X,W)+2(n-1)g(\phi{X},W)-S(X,W)\nonumber\\&&
=-(n-1)A(X)\eta(W)+g(X,W)B(\xi)-\eta(W)B(X)+\eta(X)p(W)\nonumber\\&&
-g(\phi{X},W)B(\xi)+C(\xi)S(W,X)+(n-1)C(\xi)g(\phi{W},X)\nonumber\\&&
-(n-1)D(W)\eta(X)-\eta(W)p(X)+2g(\phi{X},W)p(\xi).
\end{eqnarray}
Putting $W=\xi$ in (\ref{5.12}) and using (\ref{2.2}), (\ref{2.4}) and (\ref{2.11}), we get
\begin{eqnarray}
\label{5.13}
&&\eta(X)B(\xi)-p(X)-B(X)-(n-1)C(\xi)\eta(X)\nonumber\\&&
-(n-1)D(\xi)\eta(X)+\eta(X)p(\xi)-(n-1)A(X)=0.
\end{eqnarray}
Replacing $X$ with $\xi$ in (\ref{5.12}) and then using (\ref{2.2}), (\ref{2.4}) and (\ref{2.11}), we find
\begin{equation}
\label{5.14}
-(n-1)A(\xi)\eta(W)-(n-1)C(\xi)\eta(W)-(n-1)D(W)+p(W)-\eta(W)p(\xi)=0.
\end{equation}
Replacing $W$ by $X$ in (\ref{5.14}), we get
\begin{equation}
\label{5.15}
-(n-1)A(\xi)\eta(X)-(n-1)C(\xi)\eta(X)-(n-1)D(X)+p(X)-\eta(X)p(\xi)=0.
\end{equation}
Adding (\ref{5.13}) and (\ref{5.15}) and using (\ref{5.10}), we obtain
\begin{equation}
\label{5.16}
\eta(X)B(\xi)-(n-1)A(X)-(n-1)D(X)-B(X)-(n-1)C(\xi)\eta(X)=0.
\end{equation}
Taking $X=\xi$ in (\ref{5.9}) and then using (\ref{2.2}), (\ref{2.4}) and (\ref{2.11}), we find
\begin{equation}
\label{5.17}
B(Z)-(n-1)A(\xi)\eta(Z)-\eta(Z)B(\xi)-(n-1)C(Z)-(n-1)D(\xi)\eta(Z)=0.
\end{equation}
Replacing $Z$ by $X$ in (\ref{5.17}), we get
\begin{equation}
\label{5.18}
B(X)-(n-1)A(\xi)\eta(X)-\eta(X)B(\xi)-(n-1)C(X)-(n-1)D(\xi)\eta(X)=0.
\end{equation}
Adding (\ref{5.16}) and (\ref{5.18}) and using (\ref{5.10}), we have
\begin{equation}
A(X)+C(X)+D(X)=0.
\end{equation}
Hence the statement of the theorem.
\end{proof}
\begin{thm}
Let $M_{n}$, $(n>3)$, be an $n-$dimensional weakly Ricci-symmetric Kenmotsu manifold admitting a semi-symmetric non-metric connection $\nabla$ then there is no $M_{n}$, unless $\alpha+\beta+\gamma$ is everywhere zero.
\end{thm}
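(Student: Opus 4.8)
The plan is to mimic the strategy of Theorem 5.9, working at the level of the Ricci tensor $\tilde S$ rather than the full curvature tensor $R$. Starting from the defining relation (\ref{5.6}) of weak Ricci-symmetry with respect to $\nabla$, I would first substitute $Z=\xi$ and use (\ref{2.4}), (\ref{2.11}) together with (\ref{3.9}) to get an expression for $(\nabla_{X}\tilde S)(Y,\xi)$ purely in terms of $\alpha,\beta,\gamma$ and $\eta$. On the other hand, I already have the intrinsic computation (\ref{4.13}) for $(\nabla_{X}\tilde S)(Y,\xi)$, namely $-(n-1)g(X,Y)+2(n-1)g(\phi X,Y)-S(X,Y)$. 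Equating the two yields a master identity
\[
\alpha(X)\tilde S(Y,\xi)+\beta(Y)\tilde S(X,\xi)+\gamma(\xi)\tilde S(X,Y)
=-(n-1)g(X,Y)+2(n-1)g(\phi X,Y)-S(X,Y),
\]
which after applying (\ref{2.11}) and (\ref{3.6})/(\ref{3.9}) becomes a relation among $\alpha,\beta,\gamma$, the tensors $S$ and $g$, and the $2$-form $\varphi$.

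Next I would extract scalar relations by successive specialization, exactly paralleling (\ref{5.10})--(\ref{5.18}). Putting $X=Y=\xi$ and using (\ref{2.2}), (\ref{2.4}), (\ref{2.11}) should give the analogue of (\ref{5.10}), i.e. $\alpha(\xi)+\beta(\xi)+\gamma(\xi)=0$. Then setting only $Y=\xi$, and separately only $X=\xi$, in the master identity produces two vector equations in $X$ (respectively in $Y$); replacing the free variable by a common symbol and adding them, while feeding in $\alpha(\xi)+\beta(\xi)+\gamma(\xi)=0$, should collapse the $S$-terms and the $\varphi$-terms and leave a linear combination of $\alpha,\beta,\gamma$ evaluated on an arbitrary vector field. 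Iterating this symmetrization once more, as in the passage from (\ref{5.16}) and (\ref{5.18}) to the final conclusion, should deliver $\alpha(X)+\beta(X)+\gamma(X)=0$ for all $X$, which is the assertion of the theorem.

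The main obstacle I anticipate is bookkeeping rather than conceptual: the terms involving $g(\phi X,Y)$ (equivalently $\varphi(X,Y)$) coming from (\ref{3.6}) and (\ref{4.13}) are antisymmetric in $X,Y$, whereas $\tilde S$ is not symmetric either (it carries an antisymmetric part $(n-1)g(\phi Y,Z)$), so one must be careful about the order of arguments when specializing and adding equations. I expect that the antisymmetric pieces cancel precisely upon the symmetrizing additions used above, just as the $S(X,Z)$ and $g(\phi X,Z)$ contributions cancel in the weakly symmetric case; verifying that cancellation cleanly is the delicate step. A secondary point is ensuring that the coefficient of the surviving $\alpha+\beta+\gamma$ term is a nonzero multiple of $(n-1)$ (using $n>3$), so that the final division is legitimate. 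Once these cancellations are in hand the conclusion is immediate, and one records it as: there is no such $M_{n}$ unless $\alpha+\beta+\gamma$ vanishes identically.
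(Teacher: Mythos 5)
Your opening moves coincide with the paper's: substituting $Z=\xi$ into (\ref{5.6}), invoking (\ref{2.11}), (\ref{3.6}), (\ref{3.9}), equating with (\ref{4.13}) to get the master identity (the paper's (\ref{5.20})), and then taking $X=Y=\xi$ to obtain $\alpha(\xi)+\beta(\xi)+\gamma(\xi)=0$. Setting $X=\xi$ and $Y=\xi$ separately in that identity does, as you predict, yield $\beta(Y)=\beta(\xi)\eta(Y)$ and $\alpha(X)=\alpha(\xi)\eta(X)$. But there is a genuine gap in the last stage of your plan: once you have put $Z=\xi$ in (\ref{5.6}), the $1$-form $\gamma$ appears in the master identity only through the scalar $\gamma(\xi)$ (its coefficient is $\tilde{S}(X,Y)$, with $\xi$ no longer in $\gamma$'s slot). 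Consequently no amount of further specialization or "symmetrizing addition" of that single identity can produce $\gamma(W)$ for an arbitrary vector field $W$; your proposed iteration only ever recovers $\alpha(X)+\beta(X)=-\gamma(\xi)\eta(X)$, not the full statement $\alpha(X)+\beta(X)+\gamma(X)=0$. The analogy with the weakly symmetric case is misleading here, because there the contracted identity (\ref{5.7}) can be specialized at \emph{two} different slots ($Z=\xi$ and $W=\xi$), which is what liberates $C$ and $D$; the Ricci identity (\ref{5.6}) specialized at $Z=\xi$ offers no such second entry point, and since $\tilde{S}$ is not symmetric you cannot reuse (\ref{4.13}) for the slot-swapped quantity $(\nabla_{X}\tilde{S})(\xi,Z)$.

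The missing ingredient, which the paper supplies, is a separate direct computation showing $(\nabla_{\xi}\tilde{S})(\xi,X)=0$, obtained from (\ref{2.2}), (\ref{2.8}), (\ref{2.11}), (\ref{2.12}), (\ref{3.3}) and (\ref{3.9}) (the key facts being $\nabla_{\xi}\xi=0$ and $\xi(\eta(X))=\eta(D_{\xi}X)$). Feeding this back into the defining relation (\ref{5.6}) with $X=Y=\xi$ in the first two arguments makes $\gamma(Z)$ appear multiplied by $\tilde{S}(\xi,\xi)=-(n-1)\neq 0$, giving $\gamma(Z)=\gamma(\xi)\eta(Z)$. Only then does adding the three relations $\alpha(X)=\alpha(\xi)\eta(X)$, $\beta(X)=\beta(\xi)\eta(X)$, $\gamma(X)=\gamma(\xi)\eta(X)$ and using $\alpha(\xi)+\beta(\xi)+\gamma(\xi)=0$ yield the conclusion. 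You should add this step; without it the argument does not close.
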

\begin{proof}
Putting $Z=\xi$ in (\ref{5.6}) and then using (\ref{2.11}), (\ref{3.6}) and (\ref{3.9}), we find
\begin{eqnarray}
\label{5.19}
(\nabla_{X}\tilde{S})(Y,\xi)&=&-(n-1)\alpha(X)\eta(Y)-(n-1)\beta(Y)\eta(X)\nonumber\\&&
+\gamma(\xi)S(X,Y)+(n-1)\gamma(\xi)g(\phi{X},Y).
\end{eqnarray}
In consequence of (\ref{4.13}) and (\ref{5.19}), we have
\begin{eqnarray}
\label{5.20}
&&-(n-1)g(X,Y)+2(n-1)g(\phi{X},Y)-S(X,Y)\nonumber\\&&
=-(n-1)\alpha(X)\eta(Y)-(n-1)\beta(Y)\eta(X)\nonumber\\&&
+\gamma(\xi)S(X,Y)+(n-1)\gamma(\xi)g(\phi{X},Y).
\end{eqnarray}
Taking $X=Y=\xi$ in (\ref{5.20}) and using (\ref{2.2}), (\ref{2.4}) and (\ref{2.11}), we find
\begin{equation}
\label{5.21}
\alpha(\xi)+\beta(\xi)+\gamma(\xi)=0.
\end{equation}
Replacing $X$ by $\xi$ in (\ref{5.20}) and using (\ref{2.2}), (\ref{2.4}), (\ref{2.11}) and (\ref{5.21}), we get
\begin{equation}
\label{5.22}
\beta(Y)=\beta(\xi)\eta(Y).
\end{equation}
Again replacing $Y$ by $\xi$ in (\ref{5.20}) and using (\ref{2.2}), (\ref{2.4}), (\ref{2.11}) and (\ref{5.21}), we obtain
\begin{equation}
\label{5.23}
\alpha(X)=\alpha(\xi)\eta(X).
\end{equation}
From (\ref{2.2}), (\ref{2.8}), (\ref{2.11}), (\ref{2.12}), (\ref{3.3}) and (\ref{3.9}), it follows that
\begin{equation}
(\nabla_{\xi}\tilde{S})(\xi,X)=0.
\end{equation}
In view of (\ref{5.6}), above equation becomes
\begin{equation}
\label{5.24}
\alpha(\xi)\tilde{S}(\xi,X)+\beta(\xi)\tilde{S}(\xi,X)+\gamma(X)\tilde{S}(\xi,\xi)=0.
\end{equation}
With the help of (\ref{2.2}), (\ref{2.4}), (\ref{2.11}) and (\ref{3.9}), equation (\ref{5.24}) gives
\begin{equation}
\label{5.25}
\gamma(X)=\gamma(\xi)\eta(X).
\end{equation}
Adding (\ref{5.22}), (\ref{5.23}) and (\ref{5.25}) and using (\ref{5.21}), we get the statement of the theorem.
\end{proof}
%Acknowledgements. The  author  wishes  to  express  his  sincere  thanks  and  gratitude  to the  referee   for  his   valuable  suggestions  towards  the  improvement  of  the paper.

\end{document}